\newtheorem{theorem}{Theorem}[section]
\newtheorem{lemma}[theorem]{Lemma}
\newtheorem{conjecture}[theorem]{Conjecture}
\newtheorem{definition}[theorem]{Definition}
\numberwithin{equation}{section}
\numberwithin{equation}{section}
\title[Note on Collatz conjecture]{Note on Collatz conjecture}
\date{}
\author{Abdelrahman Ramzy}
\address{Department of Mathematics, Faculty of Education, Al-Azhar University, Cairo, Egypt}
\email{hosam7101996@gmail.com}
\subjclass{Primary 11P99, 11B83, 11B99}
\keywords{Collatz conjecture, 3n+1 conjecture}
\begin{document}

\begin{abstract}
In this paper, we show that if the numbers in the range $[1,2^n]$ satisfy Collatz conjecture, then almost all integers in the range $[2^n+1,2^{n+1}]$ will satisfy the conjecture as $n \to \infty$. The previous statement is equivalent to claiming that almost all integers in $[2^n+1,2^{n+1}]$ will iterate to a number less than $2^n$. This actually has been proved by many previous results. But in this paper we prove this claim using different methods. We also utilize our assumption on numbers in $[1,2^n]$ to show that there are a set of integers (denoted by $p\operatorname{-}C$) whose seem to be not iterating to a number less than $2^n$, but since they are connected to Collatz numbers in $[1,2^n]$ they eventually will iterate to $1$. We address the distribution of $p\operatorname{-}C$ and give an explicit formula which computes a lower bound to the number of these integers. We also show (computationally) that the number of $p\operatorname{-}C$ in a given interval is proportional to the numbers of another set of incidental Collatz numbers in the same interval (whose distribution is completely unpredictable).
\end{abstract}
\maketitle

\section{Introduction}
The well-known Collatz conjecture or $3k+1$ conjecture depends on performing the following two operations on a random positive integer:

\begin{itemize}
	\item If the number is odd, multiply it by $3$ and add $1$.
	\item If the number is even, divide it by $2$.
\end{itemize}

Now, repeat these operations and take the result of each operation as the input, Collatz conjecture states that this chain of operations will eventually reach $1$. For example, if we begin with the number $3$, we will get $3 \Longrightarrow 10 \Longrightarrow 5 \Longrightarrow 16 \Longrightarrow 8 \Longrightarrow 4 \Longrightarrow 2 \Longrightarrow 1$. There have been many papers studying this conjecture, but fortunately, the conjecture still not solved (namely, proved or disproved). For example, the authors in \cite{kra} showed that there are at least $x^{0.84}$ integers below $x$ whose are Collatz numbers. While in \cite{ars} the author showed that from a given range $[1, G]$ of Collatz numbers, we can construct finitely many odd Collatz numbers of different forms. Also the author in \cite{zar} addressed Collatz conjecture and simplified it by constructing a streamlined version of the conjecture that involves only integers $K$ such that $K \equiv 2 \bmod 3$. Almost any result about this conjecture involves the word \textbf{almost}, which appears in our paper and in \cite{Tao} as well, but almost all of these results still very great. For example, the great results in \cite{Tao} show that almost all Collatz orbits attain almost bounded values, which means that finding a counter-example to the conjecture is very likely impossible. 

In this paper, we show that if the integers in $[1,2^n]$ are Collatz numbers, then almost all integers in $[2^n+1,2^{n+1}]$ are collatz numbers as well (as $n \to \infty$). This statement means that almost all integers in $[2^n+1,2^{n+1}]$ will iterate to a number less that $2^n$, which has been proved by many previous results (see \cite{Kor} and \cite{ALL} for instance). But in this paper we prove this statement using different method. We also address a set of integers (denoted by $p\operatorname{-}C$) whose not iterating to a number less than $2^n$ (in the short-term iterations), and show that they are eventually will iterate to a number less than $2^n$ and consequentially to $1$. We also define incidental Collatz integers and show (computationally) that the ratios of $p\operatorname{-}C$ to the number of incidental Collatz integers are almost equal for any value of $n$. 

The paper is organized as follows. In section 2 we define the chain of an integer, and then we address odd integers in $[2^n+1,2^{n+1}]$ and show that each one of them has a unique chain. In section 3 we give a formula which computes a lower bound of Collatz integers in the range $[2^n+1,2^{n+1}]$. In Section 4 we show the main idea behind $p\operatorname{-}C$ and give some definitions in order to help us construct our formula. In Section 5 we give a formula which computes a lower bound to th number of $p\operatorname{-}C$ and give some computations which support our claim about the proportion between $p\operatorname{-}C$ and incidental Collatz integers.

\section{Proving the uniqueness of the chains in the range $[2^n+1,2^{n+1}]$}

Assume that the numbers in $[1,2^n]$ are Collatz numbers (namely, satisfying Collatz conjecture), if $L \in [2^n+1,2^{n+1}]$ and $L$ is even, then $L$ will be sent directly to the range $[1,2^n]$. Hence, there are $2^{n-1}$ integers in $[2^n+1,2^{n+1}]$ satisfying Collatz conjecture (namely, even numbers). Now, we will study odd numbers in $[2^n+1,2^{n+1}]$.  If $L$ is odd, then it is obvious that we will start the chain by the function $A(L)=3L+1$ followed by $B(A(L))=(3L+1)/2$, and then we won't know if $BA(L)$ should composed by $A$ or $B$ since we don't know if $BA(L)$ is odd or even. Now, let $C_n(L)$ be the chain of $L$ which starts with $BA$, ends with $B$ and contains $n$ $B$ functions, namely, $C_n(L)=B..........BA(L)$ and $\beta (C_n)=n$ regardless of the value of $\alpha (C_n)$ (such that $\beta (C_n)$ is the number of $B$ functions in $C_n(L)$ and $\alpha (C_n)$ is the number of $A$ functions in $C_n(L)$). In other words, the chain of $L$ is divided to a number of cells which equals to $\beta (C_n)=n$, and each cell either contains $B$ or $BA$ . We clarified why $C_n(L)$ should start with $BA$, but why should it end with $B$? it is because the main purpose of the chain is to send $L$ to the range $[1,2^n]$, therefore, it should end with a decrement, namely the function $B$. 

Now, it is obvious that  $1 \leq \alpha (C_n) \leq n$, but since $C_n(L)$ starts with $BA$, then the number of different $C_n$ chains equals to $\binom{n-1}{0} + \binom{n-1}{1} + \binom{n-1}{2} ..... + \binom{n-1}{n-1} = 2^{n-1}$ which is equivalent to the number of odd integers in $[2^n+1,2^{n+1}]$. To study these chains and decide which one sends it's odd integer to the range $[1,2^{n-1}]$, we should prove that every odd integer $L \in [2^n+1,2^{n+1}]$ has a unique chain $C_n$. To prove that we require the following theorem. Note that the following theorem is equivalent to the periodicity theorem in \cite{Ter}, but we rephrase it here in a different way in order to help us prove Theorem \ref{Thm12} which will be used later to construct the main formulas in Sections $3$ and $5$.

%\subsection{Proving required lemmas.}

\begin{theorem} \label{Thm11}
	If $M=2^z+L$, then the order of the functions in $C_z(M)$ will be identical to the order in $C_z(L)$, and if the cell after $C_z(L)$ contains $B$, then the cell after $C_z(M)$ will contain $BA$ and vice versa.
\end{theorem}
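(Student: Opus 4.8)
The plan is to follow the Collatz process on $L$ cell by cell and compare it with the process on $M = 2^z + L$. For an odd positive integer $L$ write $V_0(L) = L$ and let $V_k(L)$ denote the value reached after applying the first $k$ cells of the chain of $L$; since each cell contains exactly one $B$, this is the value after $k$ halvings. By definition the $(k{+}1)$-st cell is $BA$ when $V_k(L)$ is odd and $B$ when $V_k(L)$ is even, and correspondingly $V_{k+1}(L) = (3V_k(L)+1)/2$ or $V_{k+1}(L) = V_k(L)/2$. (An immediate induction also gives the affine form $V_k(L) = (3^{a_k}L + c_k)/2^k$, with $a_k$ the number of $BA$-cells among the first $k$ cells and $c_k$ independent of $L$, but only the one-step recursion is needed below.)

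The core of the argument is the following invariant, which I would prove by induction on $k$ for $0 \le k \le z$, carrying two statements simultaneously: (i) the first $k$ cells of the chain of $M$ coincide, in order, with the first $k$ cells of the chain of $L$; (ii) $V_k(M) = V_k(L) + 3^{a_k}\,2^{\,z-k}$. The base case $k=0$ is just $M = L + 2^z$. For the inductive step assume (i) and (ii) at some $k \le z-1$. Then $z-k \ge 1$, so (ii) forces $V_k(M) \equiv V_k(L) \pmod 2$; hence $V_k(M)$ and $V_k(L)$ have the same parity, the $(k{+}1)$-st cell of the chain of $M$ equals that of $L$ (both $BA$ or both $B$), which gives (i) at $k+1$, and in particular the halving in that cell is exact for $M$, so the chain of $M$ genuinely is defined this far. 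A one-line computation in each of the two cases — using $a_{k+1} = a_k+1$ in the $BA$-case and $a_{k+1} = a_k$ in the $B$-case — then propagates (ii) to $k+1$.

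Taking $k=z$ in the invariant yields at once the first assertion: the first $z$ cells of $C_z(M)$ and $C_z(L)$ appear in the same order. Moreover (ii) becomes $V_z(M) = V_z(L) + 3^{a_z}$, and since $3^{a_z}$ is odd, $V_z(M)$ and $V_z(L)$ have opposite parities. As the cell immediately after $C_z$ is $BA$ precisely when the value just reached is odd and $B$ precisely when it is even, the cell after $C_z(M)$ contains $B$ if and only if the cell after $C_z(L)$ contains $BA$, and vice versa; this is the second assertion.

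The only point requiring care is the bookkeeping of the simultaneous induction: one must thread the hypothesis ``the cells agree so far'' through the argument so that each $V_k(M)$ is legitimately the integer produced by the Collatz process on $M$ (all intermediate halvings exact), and one must note that the parity-matching conclusion is available only for $k \le z-1$, the parity being deliberately flipped at step $z$ because $2^{z-k}$ drops from an even value to the odd value $1$. There is no estimate or delicate analysis involved; the entire content lies in isolating the invariant $V_k(M) - V_k(L) = 3^{a_k}\,2^{\,z-k}$ and observing that its $2$-adic valuation equals $0$ exactly at step $z$.
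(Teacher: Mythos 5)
Your proof is correct and follows essentially the same route as the paper's: both track the difference between the $M$-orbit and the $L$-orbit, observing that each $A$ multiplies it by $3$ (the $+1$ cancels) and each $B$ halves it, so that after the $z$ halvings of $C_z$ the difference is the odd number $3^{\alpha}$, which forces the agreement of all earlier cells and the parity flip at the cell after $C_z$. Your explicit simultaneous induction on the invariant $V_k(M)-V_k(L)=3^{a_k}2^{\,z-k}$ simply makes rigorous what the paper asserts informally in a single sentence.
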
 

\begin{proof}
	If $M=2^z+L$ and $L$ is odd, then $M$ is odd and $A(M)=3(2^z+L)+1=3*2^z+A(L)$. Also if $L$ is even, then $M$ is even and $B(M)=\frac{2^z+L}{2}=2^{z-1}+B(L)$. Therefore, we guarantee that the chain of $M$ will be identical to the chain of $L$ as long as the chain of $M$ equals to the chain of $L$ plus even number,  which depends on the value of $z$. Hence, $C_z(M)$ is identical to $C_z(L)$, $C_z(M)=\frac{3^{\alpha(C_z(M))}*2^z}{2^z}+C_z(L)=3^{\alpha(C_z(M))}+C_z(L)$ and if the cell after $C_z(L)$ contains $B$ (namely, $C_z(L)$ is even), then the cell after $C_z(M)$ will contain $BA$ and vice versa.
\end{proof}	

\begin{theorem}\label{Thm12}
	Every odd integer in $[2^n+1,2^{n}]$ has a unique chain $C_n$.
\end{theorem}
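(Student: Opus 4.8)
The plan is to prove, by induction on the chain length, a bijection statement that has Theorem~\ref{Thm12} (with the range read as $[2^{n}+1,2^{n+1}]$, consistently with the preceding discussion) as a special case. For an odd integer $L$, let $P_z(L)$ denote the ordered list of the $z$ cells making up $C_z(L)$. Because the first cell is forced to be $BA$ and each of the remaining $z-1$ cells is independently $B$ or $BA$, there are exactly $2^{z-1}$ possible length-$z$ patterns, which is also the number of odd integers in $[1,2^{z}-1]$ and the number of odd integers in $[2^{z}+1,2^{z+1}]$. The claim I would establish is: for every $z\ge 1$, the map $L\mapsto P_z(L)$ is a bijection from the set of odd integers in $[1,2^{z}-1]$ onto the set of all length-$z$ patterns. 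The base case $z=1$ is immediate, since the only odd integer in range is $1$ and the only admissible $1$-cell pattern is $BA$.

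For the inductive step I would assume the claim for $z=k$ and split the odd integers in $[1,2^{k+1}-1]$ into those lying in $[1,2^{k}-1]$ and those of the form $2^{k}+L$ with $L$ odd in $[1,2^{k}-1]$. Applying Theorem~\ref{Thm11} with $z=k$ to each such $L$, the first part says that $C_k(2^{k}+L)$ and $C_k(L)$ have identical orders of functions, so $P_{k+1}(L)$ and $P_{k+1}(2^{k}+L)$ agree on their first $k$ cells; by the induction hypothesis these first $k$ cells equal $P_k(L)$, and as $L$ varies they range bijectively over all length-$k$ patterns. The second part of Theorem~\ref{Thm11} says that the $(k+1)$-th cell of the chain of $2^{k}+L$ is $BA$ exactly when that of the chain of $L$ is $B$. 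Hence the pair $\{L,\,2^{k}+L\}$ is carried onto precisely the two length-$(k+1)$ patterns that extend the common length-$k$ pattern $P_k(L)$, one for each choice of last cell. Distinct $L$ produce distinct $P_k(L)$ and therefore disjoint pairs of extensions, and every length-$(k+1)$ pattern restricts to $P_k(L)$ for a unique $L$; so $L\mapsto P_{k+1}(L)$ is a bijection from the odd integers in $[1,2^{k+1}-1]$ onto the set of length-$(k+1)$ patterns, which closes the induction.

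To obtain Theorem~\ref{Thm12} itself, note that the odd integers in $[2^{n}+1,2^{n+1}]$ are exactly the numbers $2^{n}+L$ with $L$ odd in $[1,2^{n}-1]$, because $2^{n+1}$ is even. Theorem~\ref{Thm11} with $z=n$ gives $P_n(2^{n}+L)=P_n(L)$, and the claim at level $n$ says that $L\mapsto P_n(L)$ is a bijection onto the $2^{n-1}$ length-$n$ patterns; composing with $L\mapsto 2^{n}+L$ shows that the chain $C_n$ of an odd integer in $[2^{n}+1,2^{n+1}]$ determines that integer uniquely and that every admissible chain is realized, which is the asserted uniqueness.

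The step I expect to be the main obstacle is the inductive step, and specifically using the ``opposite cell'' half of Theorem~\ref{Thm11} correctly: one needs the two integers sharing a residue modulo $2^{k}$ to be sent onto the two \emph{distinct} length-$(k+1)$ extensions of their common pattern, not onto the same one, and it is exactly this that makes the count double from $2^{k-1}$ to $2^{k}$ and forces surjectivity. A smaller point to settle en route is that $P_z(L)$ is well defined for every odd $L\ge 1$, including $L=1$, where the chain is eventually periodic but still yields a legitimate sequence of cells; this is harmless since the Collatz operations are defined at every positive integer.
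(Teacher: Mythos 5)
Your proposal is correct, and it rests on exactly the same engine as the paper --- both parts of Theorem~\ref{Thm11} --- but the packaging is genuinely different. The paper argues by contradiction: it takes two odd $L<M$ in $[2^n+1,2^{n+1}]$ with identical chains, writes $M-L=2^{a_1}+\cdots+2^{a_r}$ with $a_1<\cdots<a_r$, and applies Theorem~\ref{Thm11} at the lowest exponent $a_1$ to conclude that the cell in position $a_1+1$ must already disagree (the higher powers $2^{a_2},\dots,2^{a_r}$ are checked not to disturb the prefix $C_{a_1+1}$); this gives injectivity directly, and surjectivity is left implicit in the earlier count of $2^{n-1}$ admissible patterns against $2^{n-1}$ odd integers. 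You instead run an induction on the chain length, pairing $L$ with $2^k+L$ and showing the pair lands on the two distinct one-cell extensions of the common pattern $P_k(L)$, which yields injectivity and surjectivity simultaneously and then transfers to $[2^n+1,2^{n+1}]$ by one more application of Theorem~\ref{Thm11}. What your route buys is an explicit statement that every admissible pattern is realized and a cleaner handling of the ``higher bits do not matter'' step (it is absorbed into the induction rather than checked along a chain of identities); what the paper's route buys is brevity and a direct localization of the first disagreement at the lowest differing binary digit. One cosmetic remark: in your inductive step, the fact that the first $k$ cells of $P_{k+1}(L)$ equal $P_k(L)$ is true by the definition of the chain as a nested sequence of cells, not by the induction hypothesis, which you only need for the bijectivity of $L\mapsto P_k(L)$.
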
 

\begin{proof}
	Assume that $L, M \in [2^n+1,2^{n+1}]$, $C_n(L)$ is identical to $C_n(M)$, $L < M$ and $M-L=2^{a_1}+ 2^{a_2}+......+2^{a_r}$ such that $ 1 \le a_1 < a_2 < ...... <a_r$, then $C_{a_1+1}(M)$ is identical to $C_{a_1+1}(L)$. But $C_{a_1+1}(M)$ is not identical to $C_{a_1+1}(L+2^{a_1})$ by Theorem \ref{Thm11}. Now, we have $C_{a_1+1}(L+2^{a_1})$ is identical to $C_{a_1+1}(L+2^{a_1}+2^{a_2})$ since $a_2 \geq a_1+1$, $C_{a_1+1}(L+2^{a_1}+2^{a_2})$ is identical to $C_{a_1+1}(L+2^{a_1}+2^{a_2}+2^{a_3})$ since $a_3 > a_1+1$, ....., $C_{a_1+1}(L+2^{a_1}+2^{a_2}+ ...... +2^{a_{r-1}})$ is identical to $C_{a_1+1}(L+2^{a_1}+2^{a_2}+2^{a_3}+ ...... +2^{a_r})=C_{a_1+1}(M)$ since $a_r > a_1+1$. Consequently, $C_{a_1+1}(L+2^{a_1})$ is identical to $C_{a_1+1}(M)$ which is clearly a contradiction and our lemme is proved.
\end{proof}	

\section{A lower bound to the number of Collatz integers in the range $[2^n+1,2^{n+1}]$}

Since we proved that each odd integer in $[2^n+1,2^{n+1}]$ has a unique chain $C_n$, we can give a formula which computes the number of chains that sends it's own integer to the range $[1,2^n]$. But first, we should acknowledge that the formula depends on the assumption that the addition of $1$ in the function $A$ can be ignored, namely the function $A$ can be reduced to multiplying by $3$. In other words, the formula depends on the following conjecture which seems very likely to be true.
\begin{conjecture} \label{con1}
	Assume that odd integers in $[2^n+1,2^{n}]$ are very close to $2^{n+1}$. If $2^{\beta(C_n)} > 2* 3^{\alpha(C_n)} \Longrightarrow 2^{\beta(C_n)-1} > 3^{\alpha(C_n)} \Longrightarrow \frac{2^{\beta(C_n)-1}}{3^{\alpha(C_n)}} > 1$, then either $C_n(L)$ in $[1, 2^n]$, or it starts with a chain that is in $[1, 2^n]$. In other words, $C_n(L)-\frac{3^{\alpha(C_n)}*L}{2^{\beta(C_n)}} \leq 2^n-\frac{3^{\alpha(C_n)}*L}{2^{\beta(C_n)}}$ (or it starts with such a chain).
\end{conjecture}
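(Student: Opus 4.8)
The plan is to make the heuristic underlying the statement rigorous by following the chain one cell at a time and keeping track of the additive $+1$'s exactly. Fix an odd $L\in[2^n+1,2^{n+1}]$ with chain $C_n=C_n(L)$, set $\alpha=\alpha(C_n)$ and $\beta(C_n)=n$, let $v_0=L$, and let $v_k$ be the number obtained from $L$ after running the first $k$ cells of $C_n$, so that $v_k=v_{k-1}/2$ on a $B$-cell, $v_k=(3v_{k-1}+1)/2$ on a $BA$-cell, and $v_n=C_n(L)$. A direct induction gives $v_k=(3^{\alpha_k}L+c_k)/2^{k}$, where $\alpha_k$ is the number of $A$'s among the first $k$ cells and $c_k\ge 0$ is an integer with $c_0=0$ and $c_k=3c_{k-1}+2^{k-1}$ on a $BA$-cell. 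Discarding $c_k$ is exactly the ``ignore the $+1$'' approximation $3^{\alpha_k}L/2^{k}$; at $k=n$ the hypothesis $2^{\,n-1}>3^{\alpha}$ and $L<2^{n+1}$ give $3^{\alpha}L/2^{n}<2\cdot 3^{\alpha}<2^{n}$. Finally, the two alternatives in the statement together assert exactly that $v_j\le 2^{n}$ for some $j\in\{1,\dots,n\}$ ($j=n$ is ``$C_n(L)\in[1,2^n]$'', and $j<n$ is ``$C_n$ starts with a chain in $[1,2^n]$''), so it is enough to show that $v_1,\dots,v_n$ cannot all exceed $2^{n}$.

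Suppose for contradiction that $v_j>2^{n}$ for every $j\in\{0,\dots,n\}$. Taking base-$2$ logarithms of the recursion and telescoping, with $\mathcal B$ the set of $BA$-cells ($|\mathcal B|=\alpha$),
\[
\log_2 v_n=\log_2 L+\log_2 3^{\alpha}-n+\sum_{k\in\mathcal B}\epsilon_k,\qquad \epsilon_k=\log_2\!\Bigl(1+\tfrac{1}{3v_{k-1}}\Bigr)>0 .
\]
By the standing assumption $v_{k-1}>2^{n}$ for each $k\in\mathcal B$, so $\epsilon_k<\tfrac{1}{3\cdot 2^{n}\ln 2}$; since $3^{\alpha}<2^{\,n-1}$ forces $\alpha<n$, the whole correction is $<\tfrac{n}{3\cdot 2^{n}\ln 2}$. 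Combining this with $\log_2 L\le\log_2(2^{n+1}-1)<n+1$ and $n-\log_2 3^{\alpha}=1+\log_2\!\bigl(2^{\,n-1}/3^{\alpha}\bigr)$ yields
\[
\log_2 v_n< n-\log_2\!\bigl(2^{\,n-1}/3^{\alpha}\bigr)+\tfrac{n}{3\cdot 2^{n}\ln 2}.
\]

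To conclude I would use that the gap $\log_2(2^{\,n-1}/3^{\alpha})$ dwarfs the exponentially small correction. This is an instance of Baker's theorem on linear forms in logarithms: there is an effective absolute constant $C$ with $|2^{\,n-1}-3^{\alpha}|\ge 2^{\,n-1}/n^{C}$ whenever $3^{\alpha}<2^{\,n-1}$, hence $\log_2(2^{\,n-1}/3^{\alpha})\ge\log_2(1+n^{-C})\gg n^{-C}$, which exceeds $\tfrac{n}{3\cdot 2^{n}\ln 2}$ once $n\ge n_0$ for a suitable explicit $n_0$. For such $n$ we then get $\log_2 v_n<n$, i.e.\ $v_n<2^{n}$, contradicting $v_n>2^{n}$; so some $v_j\le 2^{n}$ with $1\le j\le n$, which is the claim. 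The finitely many cases $n<n_0$ — for each of which there are only a handful of chains with $3^{\alpha}<2^{\,n-1}$ — are dispatched by direct computation.

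The hard part is precisely this last step. When the orbit $v_0,v_1,\dots$ lingers just above $2^{n}$, there are on the order of $n$ correction terms, each of size about $2^{-n}$, so their sum is of order $n\,2^{-n}$, whereas a bare integrality estimate gives only $|2^{\,n-1}-3^{\alpha}|\ge 1$ and hence a gap of order $2^{-n}$ — not enough. What is really needed is the transcendence-theoretic fact that powers of $2$ and $3$ cannot be abnormally close, and the remaining work is to make $C$ and $n_0$ explicit enough that the finite check is feasible. If instead one is willing to add the mild quantitative hypothesis $2^{\,n-1}-3^{\alpha}\ge n$ (more than the estimate above actually requires), the whole argument becomes elementary and Baker's theorem can be avoided.
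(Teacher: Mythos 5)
The paper offers no proof of this statement: it is deliberately left as Conjecture \ref{con1}, motivated only by the informal remark that ``the addition of $1$ in the function $A$ can be ignored,'' so there is nothing in the paper to compare your argument against --- you are attempting something the author does not. The core of your attempt is sound and is the right way to make the heuristic rigorous: writing the orbit exactly as $v_k$, telescoping $\log_2 v_n=\log_2 L+\alpha\log_2 3-n+\sum_{k\in\mathcal B}\epsilon_k$, and observing that \emph{under the contradiction hypothesis} ($v_j>2^n$ for all $j\le n$) each correction satisfies $\epsilon_k<1/(3\cdot 2^n\ln 2)$, so that the entire problem reduces to a lower bound on the gap $2^{n-1}-3^{\alpha}$. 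You also correctly identify that bare integrality ($2^{n-1}-3^{\alpha}\ge 1$) is not quite enough; that is the honest heart of the matter. As a bonus, what you prove is cleaner and stronger than what the paper states, since you dispense with the vague idealization that $L$ be ``very close to $2^{n+1}$.''

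Two repairs are needed before this is a complete proof. First, the bound you attribute to Baker is slightly too strong: the standard two-logarithm estimates (Laurent--Mignotte--Nesterenko) give $|(n-1)\log 2-\alpha\log 3|\ge\exp(-C(\log n)^2)$, hence a gap of size about $2^{n-1}\exp(-C(\log n)^2)$ rather than $2^{n-1}n^{-C}$; this still exceeds $n\,2^{-n}$ for large $n$, so the strategy survives, but the exponent is not a constant power of $n$ and the resulting $n_0$ is large. Second, and more seriously, the closing ``finite check for $n<n_0$'' is infeasible as described: with realistic explicit constants $n_0$ is in the thousands, and one cannot enumerate chains or integers in $[2^n+1,2^{n+1}]$ at that scale. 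The fix is to notice that your telescoping only requires a purely arithmetic inequality in the pair $(n,\alpha)$: since $\log_2(2^{n-1}/3^{\alpha})\ge(2^{n-1}-3^{\alpha})/(2^{n-1}\ln 2)$, it suffices that this exceed the total correction $\alpha/(3\cdot 2^{n}\ln 2)$, i.e.\ that $2^{n-1}-3^{\alpha}\ge\alpha/6$ --- far weaker than your proposed auxiliary hypothesis $2^{n-1}-3^{\alpha}\ge n$. The dangerous pairs are exactly the good rational approximations to $\log_2 3$ from above ($8/5$, $65/41$, $485/306$, \dots), so the verification for all $n<n_0$ is a short continued-fraction computation, not an enumeration of integers. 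With that substitution the argument closes, modulo the legitimate but heavy appeal to linear forms in logarithms.
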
 

Now, from the following inequality:

\begin{equation}\label{eq}
	\frac{2^{\beta(C_n)-1}}{3^{\alpha(C_n)}} > 1
\end{equation}

it is obvious that any chain with $\alpha < (\beta-1)\log_3(2) \Longrightarrow \alpha < (n-1)\log_3(2)$ will satisfy Collatz conjecture. Therefore, we will have a number of satisfying chains (those chains are official Collatz chains according to Definition \ref{def1}) that equals to:

\begin{equation}\label{eq1}
	\sum_{x=1}^{\lfloor (n-1)\log_3(2) \rfloor} \binom{n-1}{x-1} .
\end{equation}

Note that in the remained chains, we will have $\alpha > (n-1)\log_3(2)$ which should mean that they are not Collatz chains. But this is wrong, since in many cases we could have $\alpha > (n-1)\log_3(2)$ in total while $C_n(L)$ have sent $L$ to the range $[1,2^n]$ at some point (namely, a chain might not satisfy (\ref{eq}), but it may start with satisfying one), for example:

$$C_5(61)=BABABBBA.$$

In order to compute the exact number of these chains, we require the following definitions:

\begin{definition}\label{def1}
	Official-Collatz chain is a chain that satisfies (\ref{eq}).
\end{definition} 

\begin{definition}\label{def2}
	Non-official-Collatz chain is a chain that doesn't satisfy (\ref{eq}), but starts with satisfying one.
\end{definition}

\begin{definition}\label{def3}
	$u$-comprehensive-Collatz chain is a chain that satisfies (\ref{eq}) with the largest possible $\beta$ such that $\beta - \alpha = u$.
\end{definition}

Official-Collatz chain, Non-official-Collatz chain and $u$-comprehensive-Collatz chain will be denoted by $O\operatorname{-}C$, $N\operatorname{-}o\operatorname{-}C$ and $u\operatorname{-}c\operatorname{-}C$ respectively.

From Definitions \ref{def2} and \ref{def3}, it is obvious that for any $u\operatorname{-}c\operatorname{-}C$ we will have $\frac{3}{2} > \frac{2^{\beta(C_n)-1}}{3^{\alpha(C_n)}} > 1$, $N\operatorname{-}o\operatorname{-}C$ starts with $u\operatorname{-}c\operatorname{-}C$, $u \geq 2$ and if a set of chains are $u\operatorname{-}c\operatorname{-}C$ we can compute $\alpha$ and $\beta$ using the parameter $u$. For example, if a chain is $3\operatorname{-}c\operatorname{-}C$ then $\beta = \lfloor \frac{3*\log_2(3)-1}{\log_2(3)-1} \rfloor =6$ and $\alpha = \lfloor \frac{3-1}{\log_2(3)-1} \rfloor =3$. Therefore, we give the following lemma.

\begin{lemma}
	If a chain is $u\operatorname{-}c\operatorname{-}C$, then $\beta = \lfloor \frac{u*\log_2(3)-1}{\log_2(3)-1} \rfloor$ and $\alpha = \lfloor \frac{u-1}{\log_2(3)-1} \rfloor$.
\end{lemma}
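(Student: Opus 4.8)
The plan is to convert the defining conditions of a $u\operatorname{-}c\operatorname{-}C$ chain into a single inequality in one integer variable and then read off the two formulas. By Definition \ref{def3}, a $u\operatorname{-}c\operatorname{-}C$ chain satisfies (\ref{eq}), i.e. $2^{\beta-1} > 3^{\alpha}$, subject to the constraint $\beta - \alpha = u$, and among all chains meeting these requirements it has the largest possible $\beta$. Substituting $\alpha = \beta - u$ into (\ref{eq}) and taking base‑$2$ logarithms turns the inequality into $\beta - 1 > (\beta - u)\log_2 3$, which (multiplying through and dividing by $\log_2 3 - 1 > 0$) rearranges to
\[
\beta < \frac{u\log_2 3 - 1}{\log_2 3 - 1}.
\]
So $\beta$ is simply the largest integer strictly below the right-hand side; the $u\operatorname{-}c\operatorname{-}C$ chains are exactly those realizing this $\beta$ together with $\alpha = \beta - u$ (such chains exist: placing all the $A$'s at the front of the cells produces a legitimate $C_\beta$, and one checks $\alpha \ge 1$ holds whenever $u \ge 2$).

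First I would verify that the bound on the right is never itself an integer, so that "largest integer $<$ bound" is genuinely $\lfloor \cdot \rfloor$ of the bound rather than the bound minus one. This is where the only real care is needed, and it rests on the irrationality of $\log_2 3$: if $\tfrac{u\log_2 3 - 1}{\log_2 3 - 1}$ equalled a rational $q$, then $(u-q)\log_2 3 = 1 - q$; since $u \ge 2$ we cannot have $u = q$ (that would force $q = 1$), so $\log_2 3 = \tfrac{1-q}{u-q} \in \mathbb{Q}$, a contradiction. Hence $\beta = \big\lfloor \tfrac{u\log_2 3 - 1}{\log_2 3 - 1} \big\rfloor$, which is the first claimed formula.

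For $\alpha$, since $\beta$ is maximal and $\alpha = \beta - u$, the value of $\alpha$ is obtained from $\beta$ by subtracting the integer $u$. Using the elementary identity
\[
\frac{u\log_2 3 - 1}{\log_2 3 - 1} \;=\; u + \frac{u - 1}{\log_2 3 - 1},
\]
and pulling the integer $u$ out of the floor, I get $\alpha = \big\lfloor \tfrac{u\log_2 3 - 1}{\log_2 3 - 1} \big\rfloor - u = \big\lfloor \tfrac{u - 1}{\log_2 3 - 1} \big\rfloor$, which is the second claimed formula. As a consistency check one can compare with the worked case $u = 3$ in the text: $\big\lfloor (3\log_2 3 - 1)/(\log_2 3 - 1)\big\rfloor = 6$ and $\big\lfloor 2/(\log_2 3 - 1)\big\rfloor = 3$, matching $\beta = 6$, $\alpha = 3$. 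Apart from the irrationality step, the argument is a routine manipulation, so I do not anticipate a genuine obstacle.
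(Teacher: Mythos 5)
Your proposal is correct, and it is worth noting that the paper itself offers no proof of this lemma at all: the author merely works the single instance $u=3$ and then asserts the general formulas. Your derivation is the natural one and supplies exactly the missing justification. Substituting $\alpha=\beta-u$ into the defining inequality $2^{\beta-1}>3^{\alpha}$ and solving gives $\beta<\frac{u\log_2 3-1}{\log_2 3-1}$, and the maximality clause in the definition of a $u\operatorname{-}c\operatorname{-}C$ chain then forces $\beta$ to be the largest integer strictly below that bound. The one genuinely non-routine point --- that the bound is never an integer, so that ``largest integer strictly below'' coincides with the floor --- is handled correctly by your irrationality argument: $(u-q)\log_2 3=1-q$ with $q$ rational forces either $\log_2 3\in\mathbb{Q}$ or $u=q=1$, both excluded since $u\ge 2$. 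The reduction of the $\alpha$ formula to the $\beta$ formula via $\frac{u\log_2 3-1}{\log_2 3-1}=u+\frac{u-1}{\log_2 3-1}$ and the integer-shift property of the floor is also correct, and your existence remark (that a chain realizing the maximal $\beta$ with $\alpha=\beta-u\ge 1$ actually exists) covers a detail the paper never addresses. In short, your argument is sound and strictly more complete than what the paper provides.
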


Now, it is obvious that for any $u \geq 2$, there are $\binom{\beta-1}{\alpha-1}=\binom{\lfloor \frac{u*\log_2(3)-1}{\log_2(3)-1} \rfloor-1}{\lfloor \frac{u-1}{\log_2(3)-1} \rfloor-1}$ $u\operatorname{-}c\operatorname{-}C$, and if $\alpha > (n-1)\log_3(2)$ we can construct many chains which starts with $\beta-\alpha\operatorname{-}c\operatorname{-}C$, $\beta-\alpha-1\operatorname{-}c\operatorname{-}C$, ..... or $2\operatorname{-}c\operatorname{-}C$. Therefore, each set of chains with $n-2 \geq \alpha > (n-1)\log_3(2)$ contains a number of $N\operatorname{-}o\operatorname{-}C$ more than or equals to 

$$\sum_{x=\alpha}^{n-2} \binom{\lfloor \frac{(n-x)*\log_2(3)-1}{\log_2(3)-1} \rfloor-1}{\lfloor \frac{n-x-1}{\log_2(3)-1} \rfloor-1}.$$

Consequently, there is a number of $N\operatorname{-}o\operatorname{-}C$ more than or equal to:

\begin{equation}\label{eq2}
	\sum_{y=\lceil (n-1)\log_3(2) \rceil}^{n-2} \sum_{x=y}^{n-2} \binom{\lfloor \frac{(n-x)*\log_2(3)-1}{\log_2(3)-1} \rfloor-1}{\lfloor \frac{n-x-1}{\log_2(3)-1} \rfloor-1}.
\end{equation}

But, in order to compute the exact value of $N\operatorname{-}o\operatorname{-}C$, we should consider the combinations of the second part of the chain. for example, if $\beta = 11$ and $\alpha  = 7$ for a set of chains, we can construct many $N\operatorname{-}o\operatorname{-}C$ which starts with $3\operatorname{-}c\operatorname{-}C$ such as $\textcolor{blue}{B}BABABABA\textcolor{red}{BBABBABBA}$. We also can move the free function $\textcolor{blue}{B}$ in the previous chain to get $BA\textcolor{blue}{B}BABABA\textcolor{red}{BBABBABBA}$. Note that if we moved the free function again we will get the chain $BABA\textcolor{blue}{B}BABA\textcolor{red}{BBABBABBA}$. But we counted that chain in the set of $N\operatorname{-}o\operatorname{-}C$ which starts with $4\operatorname{-}c\operatorname{-}C$. Therefore, every free function $B$ must have a limited number of movements in order to prevent any intersections. In order to compute the allowed number of movements for each free function, we must compute the length of the wall $W=....BABABA$ which allows us to put a free function behind it without having an intersection with another $v\operatorname{-}c\operatorname{-}C$ such that $v>u$. For example, the length of the wall which allows us to put $B$ function behind it in the chain $BBABABABA\textcolor{red}{BBABBABBA}$ is three, which can be easily computed by the formula $\lceil \frac{\textcolor{green}{1}+\beta(3\operatorname{-}c\operatorname{-}C)-1-\alpha(3\operatorname{-}c\operatorname{-}C)*\log_2(3)}{\log_2(3)-1} \rceil=3$, and then the number of allowed movements equals to

$$w(1)=\alpha+1-\alpha(3\operatorname{-}c\operatorname{-}C)-\lceil \frac{\textcolor{green}{1}+\beta(3\operatorname{-}c\operatorname{-}C)-1-\alpha(3\operatorname{-}c\operatorname{-}C)*\log_2(3)}{\log_2(3)-1} \rceil=2.$$

Note that the number of allowed movements for the second free function (if it existed) is $w(2)$ (and so on), and the sentence "allowed number of movements" doesn't mean random movements, it means some movements that starts from the end of the total chain and ends at the end of the wall, and the movement doesn't count if it doesn't lead to a new combination. For example, if a chain ends with two free functions $\textcolor{blue}{B_2B_1}.....$ and the second function allowed to move number of movements, and we moved it to get $\textcolor{blue}{B_1B_2.....}$, then this movement doesn't count, since $\textcolor{blue}{B_2B_1}.....$ is identical to $\textcolor{blue}{B_1B_2.....}$. Therefore, the total number of combinations of the second part of the chain doesn't equal to $\displaystyle \prod_{s=1}^{r} w(s)$, since the free functions are identical (they are just $B$ functions). Hence, to compute the exact number of combinations, we recall the following case "Assume that the sets $A_1 \subset A_2
\subset A_3 \subset ..... \subset A_n$ have
the orders $k_1, k_2, k_3, ... , k_n$
respectively,
then how many unique multisets $\{a_1, a_2,
a_3, ...... , a_n\}$ (such that $a_1 \in A_1, a_2
\in A_2, ... , a_n \in A_n$) can be obtained
from the above sets ? this can be easily computed by the formula:
$$ \sum_{m_1=1}^{k_1}
\sum_{m_2=m_1}^{k_2} ......
\sum_{m_n=m_{n-1}}^{k_n} (1).$$ 
and this case is identical to our case in this paper". Hence, the number of combinations of the second part equals to:

\begin{equation} \label{eq3}
	\sum_{r_1=1}^{w(s)}
	\sum_{r_2=r_1}^{w(s-1)} ......
	\sum_{r_n=r_{n-1}}^{w(1)} (1).
\end{equation}

Now, in order to get the final formula which computes a lower bound to the number of satisfying Collatz chains, we should notice that some $N\operatorname{-}o\operatorname{-}C$ have no free functions. Therefore, we should disjion them from (\ref{eq2}), also we should add (\ref{eq1}) to (\ref{eq2}) and join (\ref{eq3}) to (\ref{eq2}) after redefining the function $w$ according to the variables in (\ref{eq2}). Hence, the final formula will take the form:

\begin{equation} \label{eq4}
	\begin{gathered}
		\gamma(n):=\sum_{x=1}^{\lfloor (n-1)\log_3(2) \rfloor} \binom{n-1}{x-1}+\sum_{y=\lceil (n-1)\log_3(2) \rceil}^{n-2} \binom{\lfloor \frac{(n-y)*\log_2(3)-1}{\log_2(3)-1} \rfloor-1}{\lfloor \frac{n-y-1}{\log_2(3)-1} \rfloor-1}+	\\
		\sum_{y=\lceil (n-1)\log_3(2) \rceil}^{n-3} \sum_{x=y+1}^{n-2} \binom{\lfloor \frac{(n-x)*\log_2(3)-1}{\log_2(3)-1} \rfloor-1}{\lfloor \frac{n-x-1}{\log_2(3)-1} \rfloor-1} * 
		\sum_{r_1=1}^{w(x-y))}
		\sum_{r_2=r_1}^{w(x-y-1)} ....
		\sum_{r_e=r_{e-1}}^{w(1)} (1)
	\end{gathered}
\end{equation}

such that 

$$w(s)=y+1-\lfloor \frac{n-x-1}{\log_2(3)-1} \rfloor-\lceil \frac{\textcolor{green}{s}+\lfloor \frac{(n-x)*\log_2(3)-1}{\log_2(3)-1} \rfloor-1-\lfloor \frac{n-x-1}{\log_2(3)-1} \rfloor*\log_2(3)}{\log_2(3)-1} \rceil .$$

Although (\ref{eq4}) computes the exact number of official and non official Collatz chains in the range $[2^n+1,2^{n+1}]$, we claim that it gives a lower bound to the number of chains which sends it's own integer to the range $[1,2^n]$. Because of the simple fact that not all odd integers in $[2^n+1,2^{n+1}]$ are very close to $2^{n+1}$. Therefore, there are some incidental chains whose might not be official or non official, but still send their own integers to the range $[1,2^n]$ because of the location of the input integer. For example, if we assume that integers in $[1,2^3]$ are Collatz numbers, then the chain of $9$ will take the form $$C_3(9)=BABBA.$$

It is obvious that the above chain is not official or non official Collatz chain, but it still sends the integer $9$ to the range $[1,2^3]$, and that because of the location of $9$ (since it is not very close to $2^4$). Therefore, our claim about (\ref{eq4}) is true and it gives a lower bound to the number of Collatz chains. 

\begin{definition}
	Incidental Collatz number is a number $L$ which iterates to a number less than $2^n$ although $C_n(L)$ not satisfying \ref{eq}.
\end{definition}

The number of incidental integers or chains in $[2^n+1,2^{n+1}]$ will be denoted by $T(n)$, but such chains can not be computed by a given formula, because even if we have the form of the chain we can not determine it's location in the interval $[2^n+1,2^{n+1}]$. However, in Section 5 we show (computationally) that $T(n)$ is proportional to the set of $p\operatorname{-}C$. In Table \ref{table1} we assume that integers in $[1,2^n]$ are Collatz numbers for many values $n$, and then we use $\gamma(n)$ to compute the number of official and non official chains in $[2^n+1, 2^{n+1}]$. We also consider the number of $T(n)$ in our computations. After that we computed the ratio to show that it quickly converges to $1$, note that we considered even Collatz numbers in $[2^n+1, 2^{n+1}]$ while computing the ratio. Finally, we can easily show that $\frac{\gamma(n+1)+2^{n}}{2^{n+1}} \geq \frac{\gamma(n)+2^{n-1}}{2^n}$. 

\begin{lemma}
	$\frac{\gamma(n+1)+2^{n}}{2^{n+1}} \geq \frac{\gamma(n)+2^{n-1}}{2^n}$.
\end{lemma}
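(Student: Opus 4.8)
The plan is to reduce the claimed inequality to the purely combinatorial estimate $\gamma(n+1)\geq 2\gamma(n)$ and then to establish the latter by counting chains. For the reduction one multiplies both sides of the asserted inequality by $2^{n+1}$ and cancels the common summand $2^{n}$: this shows that $\frac{\gamma(n+1)+2^{n}}{2^{n+1}}\geq\frac{\gamma(n)+2^{n-1}}{2^{n}}$ is equivalent to $\gamma(n+1)+2^{n}\geq 2\gamma(n)+2^{n}$, that is, to $\gamma(n+1)\geq 2\gamma(n)$.

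Recall that $\gamma(m)$ is the number of $C_{m}$-chains that are $O\operatorname{-}C$ or $N\operatorname{-}o\operatorname{-}C$. Call a $C_{m}$-chain \emph{good} if some initial segment of it — possibly the whole chain — satisfies (\ref{eq}) with its own values of $\alpha$ and $\beta$; by Definitions \ref{def1} and \ref{def2} the good $C_{m}$-chains are exactly the $O\operatorname{-}C$ ones (the whole chain satisfies (\ref{eq})) together with the $N\operatorname{-}o\operatorname{-}C$ ones (the whole chain does not, but a proper initial segment does), so $\gamma(m)$ counts the good $C_{m}$-chains. I would prove $\gamma(n+1)\geq 2\gamma(n)$ by exhibiting an injection
\begin{equation*}
	\Phi\colon\{\text{good }C_{n}\text{-chains}\}\times\{B,\,BA\}\longrightarrow\{\text{good }C_{n+1}\text{-chains}\},
\end{equation*}
where $\Phi(c,x)$ is obtained from $c$ by appending one extra cell $x$ at the end, so that $x$ becomes the last-applied cell. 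Since every cell carries exactly one $B$, the chain $\Phi(c,x)$ has $\beta=n+1$, it still begins with the cell $BA$, and its number of $A$'s is $\alpha(c)$ if $x=B$ and $\alpha(c)+1$ if $x=BA$; hence it is a genuine $C_{n+1}$-chain. It is good: appending a cell at the end leaves every initial segment of $c$ unchanged, so the initial segment of $c$ that witnesses (\ref{eq}) still witnesses it for $\Phi(c,x)$. And $\Phi$ is injective, since from $\Phi(c,x)$ one reads off $x$ as its final cell and recovers $c$ by deleting that cell, while the images of $x=B$ and $x=BA$ are disjoint because they end in different cells. The domain of $\Phi$ has $2\gamma(n)$ elements, so $\gamma(n+1)\geq 2\gamma(n)$, and the lemma follows.

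The step needing the most care is the preservation of goodness, which hinges on appending the extra cell at the \emph{last-applied} end of the chain rather than at the initial $BA$ end: the sub-chain certifying (\ref{eq}) always sits at the initial end, so appending there could destroy it, and inserting a cell inside a run of $B$-cells would additionally break injectivity. A secondary point to spell out is the identification of $\gamma(m)$ with the number of good $C_{m}$-chains, which relies on the fact — established in the discussion of (\ref{eq4}) — that the formula enumerates the $O\operatorname{-}C$ and $N\operatorname{-}o\operatorname{-}C$ chains with no omissions and no repetitions.
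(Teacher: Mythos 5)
Your proposal is correct and follows essentially the same route as the paper: the paper's own (very terse) proof likewise observes that each official or non-official chain $C_n$ yields the two chains $BC_n$ and $BAC_n$ in the next range, which is exactly your injection $\Phi$, giving $\gamma(n+1)\geq 2\gamma(n)$ and hence the inequality. You merely make explicit the algebraic reduction, the preservation of the witnessing initial segment, and the injectivity, all of which the paper leaves implicit.
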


\begin{proof}
	If $C_n$ is official or non official chain, then it will give two official or non official chains in the new range, namely $BAC_n$ and $BC_n$. Therefore, we will always have 
	
	$$\frac{\gamma(n+1)+2^{n}}{2^{n+1}} \geq \frac{\gamma(n)+2^{n-1}}{2^n}.$$
\end{proof}

\begin{table}[h]
	\centering 
	
	\begin{tabularx}{\linewidth}{|c|X|X|X|X|}
		\hline
		$n$  & $\gamma(n)$ & $\gamma (n) + T(n)$ & $\frac{\gamma(n)+2^{n-1}}{2^n}$ & $\frac{\gamma (n) +T(n)+2^{n-1}}{2^n}$ \\
		\hline
		3 & 1 & 1 & 0.625000000000 & 0.625000000000\\ 
		\hline 
		4 & 2 & 3 & 0.625000000000 & 0.687500000000 \\
		\hline
		5 & 6 & 9 & 0.687500000000 & 0.781250000000 \\
		\hline
		6 & 17 & 17 & 0.765625000000 & 0.765625000000 \\
		\hline
		7 & 34 & 40 & 0.765625000000 & 0.812500000000 \\
		\hline 
		8 & 77 & 85 & 0.800781250000 & 0.832031250000 \\  
		\hline
		9 & 177 & 178 & 0.845703125000 & 0.847656250000 \\ 
		\hline
		10 & 354 & 385 & 0.845703125000 & 0.875976562500 \\
		\hline
		11 & 751 & 792 & 0.866699218750 & 0.886718750000 \\
		\hline
		12 & 1502 & 1624 & 0.866699218750 & 0.896484375000 \\
		\hline
		13 & 3117 & 3372 & 0.880493164062 & 0.911621093750 \\
		\hline
		14 & 6565 & 6822 & 0.900695800781 & 0.916381835937 \\
		\hline
		15 & 13130 & 13946 & 0.900695800781 & 0.925598144531 \\
		\hline
		16 & 26958 & 28370 & 0.911346435546 & 0.932891845703 \\
		\hline
		17 & 55882 & 57256 & 0.926345825195 & 0.936828613281 \\ 
		\hline
		18 & 111764 & 116579 & 0.926345825195 & 0.944713592529 \\ 
		\hline
		19 & 227600 & 234910 & 0.934112548828 & 0.948055267333 \\ 
		\hline
		20 & 455200 & 473325 & 0.934112548828 & 0.951397895812 \\
		\hline
		21 & 921833 & 959987 & 0.939564228057 & 0.957757472991 \\
		\hline
		22 & 1878800 & 1926862 & 0.947940826416 & 0.959399700164 \\
		\hline
		23 & 3757600 & 3880688 & 0.947940826416 & 0.962614059448 \\
		\hline
		24 & 7593367 & 7818474 & 0.952599942684 & 0.966017365455 \\
		\hline
		25 & 15415312 & 15687824 & 0.959412097930 & 0.967533588409 \\
		\hline
		%26 & 30830624 &  & 0.959412097930 &  \\
		%\hline
		%27 &  &  &  &  \\
		%\hline
	\end{tabularx}
	\caption{Some counts of $\gamma(n)$ and $T(n)$}     
	\label{table1}
	%\vspace{-8mm}
\end{table}

In the following section we utilize our assumption on numbers in $[1,2^n]$ to show that there are many chains in $[2^n+1,2^{n}]$ which don't satisfy \ref{eq}, but still satisfy Collatz conjecture. Also they don't require Conjecture \ref{con1}, therefore we call them "proper Collatz chains" and denote them by $p-C$. 

\section{The main idea behind $p\operatorname{-}C$}

The chains computed by $\gamma(n)$ satisfy \ref{eq} (namely satisfy \ref{eq} or start with a satisfying chain), while $p-C$ don't. Therefore, there is no intersection between the chains computed by $\gamma(n)$ and $p-C$. But before proceeding to the main idea, we should notice that that the sentence "a chain is in $[a,b]$" means that the initiative value $L \in [a,b]$.

Now, we need to recall that each number $L \in [1,2^n]$ has a unique chain $C_n$. Also each number in $[2^n+1,2^{n+1}]$ has a unique chain $C_n$ and it is identical to it's corresponding chain in $[1,2^n]$. In other words, for each number $L \in [1,2^n]$, $C_n(L)$ is identical to $C_n(L+2^n)$ as we clarified before. If a chain is in $[2^n+1,2^{n+1}]$ and don't satisfy \ref{eq}, we deduce that it is not Collatz chain and then it will not be counted by $\gamma(n)$. But it's identical corresponding chain in $[1,2^n]$ is Collatz chain by the assumption on integers in $[1,2^n]$. Therefore, each number in the chain will satisfy the conjecture (not only the initiative value $L$). Hence, we can use the chains in $[\frac{2}{3}*2^n,2^n]$ which don't satisfy \ref{eq} to generate many $p\operatorname{-}C$. Since if the chain in $[\frac{2}{3}*2^n,2^n]$ and don't satisfy \ref{eq}, then $BA(L) \in  [2^n+1,2^{n+1}]$ and the chain of $BA(L)$ will not satisfy \ref{eq} mostly (which means that we didn't count such chains although they satisfy the conjecture). 
\\
Let's give an example to illustrate the idea. Assume that integers in $[1,2^5]$ are Collatz numbers, and let $L=31$, then 

$$C_5(31)=BABABABABA(31)$$
\\
and we guarantee that $BA(31) \in [2^5+1,2^6]$ (since $31 \in [\frac{2}{3}*2^5,2^5]$). Therefore, the chain 

$$C_5(BA(31))=C_5(47)=\textcolor{red}{B}BABABABA(47)$$
\\
will satisfy the conjecture although it doesn't satisfy \ref{eq}. Note that in general the last cell in $C_n(BA(L))$ will be unknown to us, but  we will consider the worst case and assume that it always contains $B$ function (not $BA$). 

Now, let's define the chains which will be used to generate $p\operatorname{-}C$, and in the next section we will provide a formula which computes a lower bound to the number of generative chains in $[\frac{2}{3}*2^n,2^n]$.   

\begin{definition}
	A chain is called "generative Collatz chain ($g\operatorname{-}C$)" if it is in $[\frac{2}{3}*2^n,2^n]$ and don't satisfy \ref{eq}. 
\end{definition}

But in order to guarantee that the initiative value of $p\operatorname{-}C$ is odd number, we have to make sure that $g\operatorname{-}C$ starts with $BABA(L)$. Since if $g\operatorname{-}C=B...BBA(L)$, then $BA(L)$ must be even, and even numbers in  $[2^n+1,2^{n+1}]$ are the trivial Collatz numbers.

\begin{lemma}\label{lem1}
	$g\operatorname{-}C$ must starts with $C_{n-1}BA(L)$ in order to guarantee that the initiative value of $p\operatorname{-}C$ is odd number.
\end{lemma}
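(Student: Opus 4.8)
The plan is to reduce the assertion to an elementary congruence on $L$ and then read that congruence off the cell structure of the chain. Let $C_n(L)$ be a $g\operatorname{-}C$. Since $C_n(L)$ begins with the cell $BA$, the value $L$ is odd, and the first cell sends $L$ to $M:=BA(L)=\frac{3L+1}{2}$; by construction this $M$ is precisely the initiative value of the $p\operatorname{-}C$ generated from $C_n(L)$. First I would record, exactly as in the paragraph preceding the lemma, that $M\in[2^n+1,2^{n+1}]$: from $L\ge\frac{2}{3}\cdot 2^n$ we get $3L+1>2^{n+1}$, hence $M>2^n$, and from $L\le 2^n$ we get $M\le\frac{3\cdot 2^n+1}{2}<2^{n+1}$. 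So the only quantity still to be pinned down is the parity of $M$.

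The key step is the parity computation. Because $L$ is odd, $3L+1$ is always even, and $M=\frac{3L+1}{2}$ is odd exactly when $3L+1\equiv 2\bmod 4$, i.e. when $3L\equiv 1\bmod 4$, i.e. when $L\equiv 3\bmod 4$; equivalently, $M$ is even exactly when $L\equiv 1\bmod 4$. Thus ``$M$ is odd'' is equivalent to the single congruence $L\equiv 3\bmod 4$, and this does not depend on where $L$ sits in $[\frac{2}{3}\cdot 2^n,2^n]$.

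It remains to translate this into the language of cells. Once the first cell $BA$ has been applied, the second cell of $C_n(L)$ is determined by the parity of the current value $M$: it is $BA$ when $M$ is odd and $B$ when $M$ is even. Combining this with the previous step, $C_n(L)$ begins with the two cells $BA,BA$ precisely when $L\equiv 3\bmod 4$, and in that case the remaining portion of the chain acts on $M$, starts with $BA$, and still contains $n-1$ occurrences of $B$; hence it is exactly $C_{n-1}(M)=C_{n-1}(BA(L))$, and the full chain has $(n-1)+1=n$ occurrences of $B$, consistent with $\beta(C_n)=n$. Note that writing $C_{n-1}(BA(L))$ already presupposes $BA(L)$ odd, since $C_{n-1}$ is by definition a chain that begins with $BA$. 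Conversely, if $C_n(L)$ does not start with $C_{n-1}BA(L)$, then its second cell is $B$, so $M$ is even, and then the associated $p\operatorname{-}C$ has an even initiative value --- such a number is sent straight into $[1,2^n]$ and is therefore one of the trivial Collatz numbers already counted, so it is of no use for generating new $p\operatorname{-}C$. This establishes both directions of the lemma.

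The proof is short and I do not expect a genuine obstacle; the only point that needs care is the dictionary between the arithmetic condition $L\equiv 3\bmod 4$ and the cell-level statement ``starts with $C_{n-1}BA(L)$'', namely that a cell equals $BA$ if and only if the number entering it is odd, and that $C_{n-1}$ silently carries the oddness of its argument because such chains always begin with $BA$. The separate questions of whether $g\operatorname{-}C$ with $L\equiv 3\bmod 4$ exist inside $[\frac{2}{3}\cdot 2^n,2^n]$ and how many there are belong to the next section.
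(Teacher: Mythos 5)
Your proposal is correct and follows essentially the same route as the paper, which justifies the lemma only by the informal remark preceding it (the initiative value of the $p\operatorname{-}C$ is $BA(L)$, and it is odd exactly when the second cell of the $g\operatorname{-}C$ is $BA$, i.e.\ the chain begins $BABA$, which is the meaning of starting with $C_{n-1}BA(L)$). Your added details --- the explicit congruence $L\equiv 3\bmod 4$ and the verification that $BA(L)\in[2^n+1,2^{n+1}]$ --- are accurate and only make the paper's implicit argument more precise.
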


\section{A lower bound to the number of $g\operatorname{-}C$}

In order to get our formula, we should recall that all $g\operatorname{-}C$ in $[\frac{2}{3}*2^n,2^n]$. Therefore, if we split the interval $[1,2^n]$ into four intervals (namely $[1,2^{n-2}]$, $]2^{n-2},2^{n-1}]$, $]2^{n-1},3*2^{n-2}]$ and $]3*2^{n-2}, 2^n]$), we will find that the only concerned chains are the chains in the interval $]3*2^{n-2}, 2^n]$ and some of the chains in $]2^{n-1},3*2^{n-2}]$. Hence, in order to get a clear description to the concerned chains, we have to notice that each chain in $[1,2^n]$  is of the form 

$$\textcolor{blue}{BABA}C_{n-2}(L), \textcolor{blue}{BAB}C_{n-2}(L), \textcolor{blue}{BBA}C_{n-2}(L) \operatorname{ or } \textcolor{blue}{BB}C_{n-2}(L).$$
Therefore, and according Lemma \ref{lem1}, if a chain is $g\operatorname{-}C \in ]3*2^{n-2}, 2^n]$, it would be of the form 

$$\textcolor{blue}{BABA}C_{n-3}BA(L), \textcolor{blue}{BAB}C_{n-3}BA(L), \textcolor{blue}{BBA}C_{n-3}BA(L) \operatorname{ or } \textcolor{blue}{BB}C_{n-3}BA(L).$$
Thus, after assuming that the last cell in $p\operatorname{-}C$ contains $B$ function (which is the worst case), we find that each $p\operatorname{-}C$ generated from $g\operatorname{-}C \in ]3*2^{n-2}, 2^n]$ will be of the form

\begin{equation*}
	\begin{tikzcd}[row sep=huge]
		\textcolor{red}{B}\textcolor{blue}{BABA}C_{n-3}(BA(L)), \textcolor{red}{B}\textcolor{blue}{BAB}C_{n-3}(BA(L)), \textcolor{red}{B}\textcolor{blue}{BBA}C_{n-3}(BA(L)) \operatorname{ or } \textcolor{red}{B}\textcolor{blue}{BB}C_{n-3}(BA(L)). 	
		\arrow[d,swap,"BA(L)=L^{\prime}"] &   			 
		\\ 
		\textcolor{red}{B}\textcolor{blue}{BABA}C_{n-3}(L^{\prime}), \textcolor{red}{B}\textcolor{blue}{BAB}C_{n-3}(L^{\prime}), \textcolor{red}{B}\textcolor{blue}{BBA}C_{n-3}(L^{\prime}) \operatorname{ or } \textcolor{red}{B}\textcolor{blue}{BB}C_{n-3}(L^{\prime}).  & 
	\end{tikzcd}
\end{equation*}

\begin{definition}
	$[a,b]$ $p\operatorname{-}C$ are $p\operatorname{-}C$ which are generated from $g\operatorname{-}C$ in the interval $[a,b]$.
\end{definition}

Now, in order to investigate the concerned intervals, we must remember that $g\operatorname{-}C$ are in $[\frac{2}{3}*2^n,2^n]$. Therefore, the concerned intervals can be stated as follows.

\begin{definition}
	The $K\operatorname{-}th$ concerned interval is $I_K=]\frac{2^{2(K+1)-1}+1}{3*2^{2(K)}}*2^n,\frac{2^{2K-1}+1}{3*2^{2(K-1)}}*2^n]$.
\end{definition}

For example; $I_1=]3*2^{n-2}, 2^n]$ and it represents a quarter of the interval $[1,2^n]$, also $I_2=]11*2^{n-4},3*2^{n-2}]$ and it represents a quarter of the interval $]2^{n-1},3*2^{n-2}]$ (which is a quarter of $[1,2^n]$) and so on. Note that all the intervals $I_K$ are subsets of $[\frac{2}{3}*2^n,2^n]$. Now we need to determine the general formula of $I_K$ $p\operatorname{-}C$. We must assume that each $I_1$ $p\operatorname{-}C=\textcolor{red}{B}\textcolor{blue}{BB}C_{n-3}(L^{\prime})$. Since then we will guarantee that such a chain won't satisfy \ref{eq} even if it actually has another formula (namely, $\textcolor{red}{B}\textcolor{blue}{BABA}C_{n-3}(L^{\prime}), \textcolor{red}{B}\textcolor{blue}{BAB}C_{n-3}(L^{\prime}) \operatorname{ or } \textcolor{red}{B}\textcolor{blue}{BBA}C_{n-3}(L^{\prime})$). As for $I_{K \geq 2}$ $p\operatorname{-}C$, we can assume that they are of the form

$$ \textcolor{red}{B}\textcolor{blue}{BBA}\underbrace{\textcolor{blue}{BBBB......BBBB}}_{\text{$2(K-1)$ length}}C_{n-(2K+1)}(L^{\prime}).$$

Since such a chain can satisfy \ref{eq} only if it is actually of the form

$$ \textcolor{red}{B}\textcolor{blue}{BB}\underbrace{\textcolor{blue}{BBBB......BBBB}}_{\text{$2(K-1)$ length}}C_{n-(2K+1)}(L^{\prime})$$

and then we can change its corresponding $g\operatorname{-}C$ by adding $2^{n-2}$ to its initiative value $L$. Therefore, the new $g\operatorname{-}C$ will be in $I_1$, and its corresponding $I_1$ $p\operatorname{-}C$ will be of the form (after assuming the the last cell in $p\operatorname{-}C$ always contains $B$)

\begin{equation*}
	\begin{gathered}
		\textcolor{red}{B}\textcolor{blue}{BBA}\underbrace{\textcolor{blue}{BBBB......BBBB}}_{\text{$2(K-1)$length}}C_{n-(2K+1)}(L^{\prime})
		\\
		or 
		\\
		\textcolor{red}{B}\textcolor{blue}{BABA}\underbrace{\textcolor{blue}{BBBB......BBBB}}_{\text{$2(K-1)$length}}C_{n-(2K+1)}(L^{\prime})
	\end{gathered}
\end{equation*}

which are not satisfying \ref{eq}, and definitely we did not count such a chain in $I_1$ $p\operatorname{-}C$, since we assumed that they are of the form 

$$\textcolor{red}{B}\textcolor{blue}{BB}C_{n-3}(L^{\prime}).$$

Now we are ready to construct our formula. The first step is assuming that integers in $[1,2^n]$ are Collatz numbers (such that $n \geq 7$). After that (and according to the general formula of $I_K$ $p\operatorname{-}C$) we need to find the largest $K$ such that

\begin{equation}\label{eq5}
	\frac{2^{n-1}}{3^{\lceil \frac{K-1}{K} \rceil + n -(2K+1)}} < 1.
\end{equation}

From \ref{eq5} we can deduce that

\begin{equation*}
	\begin{gathered}
		%3^{\lceil \frac{K-1}{K} \rceil + n -(2K+1)} > 2^{n-1}
		%\\
		%\Downarrow
		%\\
		\lceil \frac{K-1}{K} \rceil > (n-1) \log_3(2) +2K +1 -n
		\\
		\Downarrow
		\\
		\frac{K-1}{K}  > \lfloor (n-1) \log_3(2) \rfloor +2K +1 -n
		\\
		\Downarrow
		\\
		0 > 2K^2 + (\lfloor (n-1) \log_3(2) \rfloor -n)K +1.
	\end{gathered}
\end{equation*}

Therefore, our concerned $K$ can take values up to $g(n)$ such that

\begin{equation}\label{eq6}
	g(n \geq 7)=\lfloor \frac{n- \lfloor (n-1) \log_3(2) \rfloor + \sqrt{(n- \lfloor (n-1) \log_3(2) \rfloor)^2 -8}}{4} \rfloor.
\end{equation}

Now, for a given $n$, we need to find the possible number of $I_K$ $p\operatorname{-}C$ such that $1 \leq K \leq g(n)$, i.e. the possible number of combinations of $C_{n-(2K+1)}$. In order to do that, we can use the same trick we used before in \ref{eq3}, which means that we will depend on the free functions in $C_{n-(2K+1)}$. Note that for each $1 \leq K \leq g(n)$ there is a corresponding $p\operatorname{-}C$ which has no free functions, i.e. $p\operatorname{-}C$ which start with

$$C_{n-(2K+1)}=BABABA.....BABABA(L^{\prime}).$$ 

As for the largest possible number of free functions for each $K$, it can be obtained from the following inequality

\begin{equation*}
	\begin{gathered}
		\frac{2^{n-1}}{3^{\lceil \frac{K-1}{K} \rceil + n -(2K+1)-q}} < 1.
		\\
		\Downarrow
		\\
		q < \lceil \frac{K-1}{K} \rceil + n -(2K+1) - (n-1)\log_3(2)
	\end{gathered}
\end{equation*}

Hence, $q$ can take values up to $h(K)$ such that

\begin{equation}\label{eq7}
	h(K)= \lfloor \lceil \frac{K-1}{K} \rceil + n -(2K+1) - (n-1)\log_3(2) \rfloor
\end{equation}

i.e. for each $K \leq g(n)$, there is a function $h(K)$ such that we can construct many $p\operatorname{-}C$ which start with $C_{n-(2K+1)}$ that contains a number of free functions $q \leq h(K)$.

Now, we need to determine the number of allowed movements for each free function, that actually can be obtained from the following formula which is similar (in purpose) to the function $w$. Note that the number of allowed movements for the $r \operatorname{-} th$ function varies according to the total number of free functions (namely $q$).

\begin{equation}
	z(s)= n-(2K+q+1) - \frac{\lceil \frac{\log_3(2)\textcolor{red}{s}-1}{1-\log_3(2)} \rceil + \sqrt{ \left( \lceil \frac{\log_3(2)\textcolor{red}{s}-1}{1-\log_3(2)} \rceil \right)^2}}{2}
\end{equation}

Finally, we ar ready to construct our formula which computes the number of $p\operatorname{-}C$ in the interval $[2^n+1,2^{n+1}]$. 
%Note that the functions $g(n)$, $h(n, K)$ and $z(n, K, q, s \leq q)$ will be labeled by $g$, $h$ and $z(s)$ (for brevity).

\begin{equation}
	\delta (n) = g(n) +\sum_{K=1}^{g(n)} \sum_{q=1}^{h(K)} \left( \sum_{r_1=1}^{z(q)} \sum_{r_2=r_1}^{z(q-1)}.....\sum_{r_e=r_{e-1}}^{z(1)} (1) \right).
\end{equation}

For example; at $n=14$ we find that

\begin{equation*}
	\begin{gathered}
		\delta (14) = 2 + \sum_{K=1}^{2} \sum_{q=1}^{h(K)} \left( \sum_{r_1=1}^{z(q)} \sum_{r_2=r_1}^{z(q-1)}.....\sum_{r_e=r_{e-1}}^{z(1)} (1) \right) = 2 + \sum_{q=1}^{2} \left( \sum_{r_1=1}^{z(q)} \sum_{r_2=r_1}^{z(q-1)}.....\sum_{r_e=r_{e-1}}^{z(1)} (1) \right) + 
		\\
		\sum_{q=1}^{1} \left( \sum_{r_1=1}^{z(q)} \sum_{r_2=r_1}^{z(q-1)}.....\sum_{r_e=r_{e-1}}^{z(1)} (1) \right) = 2 + \sum_{r_1=1}^{10}(1) + \sum_{r_1=1}^{8} \sum_{r_2=r_1}^{9}(1) + \sum_{r_1=1}^{8}(1) = 64.
	\end{gathered} 
\end{equation*}

In the following table we give some counts to $\delta (n)$ and $T(n)$, and then we graph them using proportional scales on the $y$ axis. By comparing the two graphs, we find that their curves are almost identical, which means that $\delta (n)$ and $T(n)$ are almost proportional (at least in the range of our computations). Therefore, we think it is reasonable to give the following conjecture.

\begin{conjecture}
	$T(n)$ equals to some constant multiplied by $\delta (n)$ as $n\to \infty$.
\end{conjecture}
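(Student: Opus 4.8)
The plan is to separate an essentially free lower bound from the genuinely hard upper bound, the latter being where all of the content lies. First I would record that $T(n)\ge\delta(n)$ for $n\ge 7$: every $p\operatorname{-}C$ is an incidental Collatz number --- its chain does not satisfy \ref{eq}, yet it reaches a value below $2^n$ because the identical corresponding chain in $[1,2^n]$ is a Collatz chain under our standing assumption --- and $\delta(n)$ is a lower bound for the number of $p\operatorname{-}C$; hence $\liminf_{n\to\infty}T(n)/\delta(n)\ge 1$. I would then split $T(n)=\#\{p\operatorname{-}C\}+R(n)$, where $R(n)$ counts the incidental numbers not produced by the $g\operatorname{-}C$ construction --- for instance $9$, whose chain $C_3(9)=BABBA$ is incidental but not a $p\operatorname{-}C$ --- so that the conjecture reduces to the two statements $\#\{p\operatorname{-}C\}=c_1\,\delta(n)(1+o(1))$ and $R(n)=c_2\,\delta(n)(1+o(1))$.

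For the first statement I would pin down the order of $\delta(n)$ itself. From \ref{eq6}, $g(n)=\Theta(n)$, and from \ref{eq7}, $h(K)=n(1-\log_3 2)+O(1)$ for bounded $K$, so the sum defining $\delta(n)$ is dominated by the small-$K$ terms (above all $K=1$) with $q$ in a critical range below $h(K)$. Each inner multiple sum counts weakly increasing tuples under the bounds $z(1)\ge z(2)\ge\cdots$ and is a binomial-type quantity; applying Stirling's formula at the maximizing $q$ --- being careful that the bounds $z(s)$ on the earliest coordinates are only $\Theta(1)$ there and so heavily restrict the count --- should give $\delta(n)=2^{(\rho+o(1))n}$ for an explicit $\rho\in(0,1)$ controlled by the threshold exponent $\log_3 2$. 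A parallel but cruder bound on the number of $g\operatorname{-}C$, hence on $\#\{p\operatorname{-}C\}$, from above by the same order then yields the first statement with explicit $c_1$.

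Next I would convert $T(n)$ into a count over chain shapes. By Theorems \ref{Thm11} and \ref{Thm12} the assignment $L\mapsto C_n(L)$ is a bijection from the odd integers of $[2^n+1,2^{n+1}]$ onto the admissible shapes; solving the recursion in the proof of Theorem \ref{Thm11} yields an explicit $2$-adic formula $L=L(C_n)$ in terms of the positions of the $A$'s, together with the running value $v_m=\bigl(3^{\alpha_m}L+r_m\bigr)/2^{m}$ after the $m$-th cell, with $r_m$ an explicit positive integer. Then $L$ is incidental exactly when no prefix of its shape satisfies the analogue of \ref{eq} (otherwise $L$ is already counted by $\gamma(n)$) while some $v_m<2^n$; at such an $m$ one has both $3^{\alpha_m}\ge 2^{m-1}$ and $3^{\alpha_m}L(C_n)<2^{n+m}$, which forces $2^{m-1}\le 3^{\alpha_m}<2^m$, an interval of length $\log_3 2<1$, so $\alpha_m$ is essentially determined by $m$. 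Thus $T(n)$ becomes a sum over $m$ of the number of shapes with the (essentially prescribed) number of $A$'s for which $L(C_n)$ is small enough to make $v_m<2^n$ --- a binomial count with an arithmetic side condition of exactly the flavour met in estimating $\delta(n)$.

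The hard step --- and the reason this is only conjectured --- is the second statement $R(n)=c_2\,\delta(n)(1+o(1))$: one must show the extra incidental numbers have the same order as $\delta(n)$ with a genuine limiting ratio. I would attempt a near-bijection: given an incidental $L$, take the shortest prefix of its chain that can be shifted into $[\tfrac{2}{3}\,2^n,2^n]$ by adding or subtracting a power of $2$, and pair $L$ with the $g\operatorname{-}C$ thereby produced; if this map can be shown to have uniformly bounded fibres and to omit only an $o(\delta(n))$-sized set of incidental numbers, the proportionality follows with $c=c_1+c_2$. The obstruction is that nothing forces the fibres to be bounded or the omitted set to be negligible: after the reduction above, controlling $R(n)$ amounts to an equidistribution statement about how the values $\log_2 L(C)$ distribute modulo $1$ against thresholds built from $\log_2 3$ along Collatz prefixes $C$ --- precisely the kind of statement that keeps the Collatz conjecture itself out of reach. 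I therefore expect the lower bound, the order of $\delta(n)$, the first statement, and the reduction of $T(n)$ all to be provable unconditionally, while the existence and value of $c$ should realistically remain open, barring either such an equidistribution hypothesis or a new identity directly linking the two counts --- in the same spirit in which the unproved Conjecture \ref{con1} underpins the whole framework.
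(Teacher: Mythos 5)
The statement you are addressing is stated in the paper as a \emph{conjecture}: the paper offers no proof at all, only tables of $\delta(n)$ and $T(n)$ for $3\le n\le 25$ and a pair of graphs whose shapes are declared ``almost identical.'' Your proposal is likewise not a proof --- you say yourself that the decisive step (the existence of the limiting constant) ``should realistically remain open'' --- so at the level of the final verdict you and the paper agree that this is an open assertion supported only by computation. That part of your assessment is sound, and your diagnosis that any genuine proof would need an equidistribution-type input about $\log_2 L$ against thresholds built from $\log_2 3$ is a reasonable articulation of why the paper stops at a conjecture.

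There is, however, a concrete error in the one piece of your plan you present as ``essentially free.'' You claim $T(n)\ge\delta(n)$ because ``every $p\operatorname{-}C$ is an incidental Collatz number,'' and you then decompose $T(n)=\#\{p\operatorname{-}C\}+R(n)$. In the paper's setup these two sets are disjoint by construction, not nested: an incidental Collatz number is one whose chain $C_n(L)$ fails (\ref{eq}) yet still drops below $2^n$ \emph{within the chain}, because of where $L$ sits in the interval (the example is $9$ with $C_3(9)=BABBA$); a $p\operatorname{-}C$ is precisely a number that does \emph{not} drop below $2^n$ within $C_n$ and is only known to satisfy the conjecture because its chain is identical to that of its twin $L-2^n\in[1,2^n]$, which is assumed Collatz. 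This is why Table 1 tabulates $\gamma(n)+T(n)$ and the resulting ratio stays strictly below $1$ --- the deficit is (in part) the $p\operatorname{-}C$ --- and why the paper's conjecture is interesting at all: it asserts proportionality between two counts with no containment relation and, as the paper stresses, no predictable structure on the $T(n)$ side. Your claimed lower bound $\liminf_{n\to\infty}T(n)/\delta(n)\ge 1$ therefore has no justification (it happens to hold in the tabulated range, e.g.\ $\delta(17)=426$ against $T(17)=1374$, but your argument for it is a misreading of the definitions), and the decomposition $T(n)=\#\{p\operatorname{-}C\}+R(n)$ on which the rest of your plan is built does not match the objects the conjecture is about.
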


\begin{center}
	\begin{tabular}{|c|c|c|}
		\hline
		$n$ & $\delta (n)$ & $ T(n)$
		\\
		\hline
		3 & 0 & 0
		\\
		\hline
		4 & 0 & 1
		\\
		\hline
		5 & 0 & 3 
		\\
		\hline
		6 & 0 & 0
		\\
		\hline
		7 & 1 & 6
		\\
		\hline
		8 & 1 & 8       
		\\
		\hline
		9 & 1 & 1
		\\
		\hline
		10 & 8 & 31
		\\
		\hline
		11 & 9 & 41
		\\
		\hline
		12 & 43 & 122
		\\
		\hline
		13 & 53 & 255
		\\
		\hline
		14 & 64 & 257
		\\
		\hline
	\end{tabular}
	\qquad
	\begin{tabular}{|c|c|c|}
		\hline
		$n$ & $\delta (n)$ & $ T(n)$
		\\
		\hline
		15 & 261 & 816
		\\
		\hline
		16 & 337 & 1412
		\\
		\hline
		17 & 426 & 1374
		\\
		\hline
		18 & 1580 & 4815
		\\
		\hline
		19 & 2109 & 7310
		\\
		\hline
		20 & 6949 & 18125
		\\
		\hline
		21 & 9705 & 38154
		\\
		\hline
		22 & 13242 & 48062
		\\
		\hline
		23 & 42398 & 123088
		\\
		\hline
		24 & 60109 & 225107
		\\
		\hline
		25 & 83390 & 272512
		\\
		\hline
	\end{tabular}
\end{center}

\begin{center}
	\begin{tikzpicture}
		\begin{axis}[
			height=6cm,
			width=6cm,
			xmin=3, xmax=14,
			ymin=0, ymax=100,
			yticklabel style={
				/pgf/number format/fixed,
				%/pgf/number format/precision=6,
				%/pgf/number format/fixed zerofill
			}  ,
			scaled y ticks=false
			]
			
			\addplot[color=blue, mark=*] coordinates {
				(3,0)
				(4,0)
				(5,0)
				(6,0)
				(7,1)
				(8,1)
				(9,1)
				(10,8)
				(11,9)
				(12,43)
				(13,53)
				(14,64)
				
			};	
			
		\end{axis}
		
	\end{tikzpicture}
	\hfil
	\begin{tikzpicture}
		
		\begin{axis}[
			height=6cm,
			width=8cm,
			xmin=15, xmax=25,
			ymin=0, ymax=100000,
			yticklabel style={
				/pgf/number format/fixed,
				%/pgf/number format/precision=6,
				%/pgf/number format/fixed zerofill
			}  ,
			scaled y ticks=false
			]
			
			\addplot[color=blue, mark=*] coordinates {
				(15,261)
				(16,337)
				(17,426)
				(18,1580)
				(19,2109)
				(20,6949)
				(21,9705)
				(22,13242)
				(23,42398)
				(24,60109)
				(25,83390)
				
			};	
			
		\end{axis}
		
	\end{tikzpicture}
\end{center}

\begin{center}
	\begin{tikzpicture}
		
		\begin{axis}[
			height=6cm,
			width=6cm,
			xmin=3, xmax=14,
			ymin=0, ymax=300,
			yticklabel style={
				/pgf/number format/fixed,
				%/pgf/number format/precision=6,
				%/pgf/number format/fixed zerofill
			}  ,
			scaled y ticks=false
			]
			
			\addplot[color=red, mark=*] coordinates {
				(3,0)
				(4,1)
				(5,3)
				(6,0)
				(7,6)
				(8,8)
				(9,1)
				(10,31)
				(11,41)
				(12,122)
				(13,255)
				(14,257)
			};	
			
		\end{axis}
		
	\end{tikzpicture}
	\hfil
	\begin{tikzpicture}
		
		\begin{axis}[
			height=6cm,
			width=8cm,
			xmin=15, xmax=25,
			ymin=0, ymax=300000,
			yticklabel style={
				/pgf/number format/fixed,
				%/pgf/number format/precision=6,
				%/pgf/number format/fixed zerofill
			}  ,
			scaled y ticks=false
			]
			
			\addplot[color=red, mark=*] coordinates {
				(15,816)
				(16,1412)
				(17,1374)
				(18,4815)
				(19,7310)
				(20,18125)
				(21,38154)
				(22,48062)
				(23,123088)
				(24,225107)
				(25,272512)
				
			};	
			
		\end{axis}
		
	\end{tikzpicture}
	
\end{center}

\section*{Acknowledgements} I am very grateful to Dr. Mohamed Anwar, Assistant professor at Mathematics department, Faculty of Science, Ain Shams University, Egypt for all the help and guidance that he has given me.

\end{document}